\theoremstyle{plain}
\newtheorem{theorem}{Theorem}[section]
\newtheorem{lemma}[theorem]{Lemma}
\newtheorem{cor}[theorem]{Corollary}
\theoremstyle{definition}
\theoremstyle{remark}
\newtheorem{rmk}[theorem]{Remark}
\newtheorem*{ack}{Acknowledgements}
\renewcommand{\tilde}[1]{\widetilde{#1}}
\newcommand{\set}[1]{{\left\{#1\right\}}}
\newcommand{\pa}[1]{{\left(#1\right)}}
\newcommand{\abs}[1]{{\left|#1\right|}}
\newcommand{\ra}{\rightarrow}
\newcommand{\lra}{\longrightarrow}
\newcommand{\sheaf}[2]{\mathcal{#1}_{#2}}
\newcommand{\fascio}[1]{\sheaf{O}{#1}}
\newcommand{\prym}{\mathscr{P}}
\newcommand{\m}[1]{\mathcal{#1}}
\newcommand{\C}{\mathbb{C}}
\newcommand{\Z}{\mathbb{Z}}
\DeclareMathOperator{\sym}{Sym}
\DeclareMathOperator{\pic}{Pic}
\DeclareMathOperator{\Id}{Id}
\title{Prym varieties of double coverings of elliptic curves}
\author{Valeria Ornella Marcucci}
\address{Dipartimento di Matematica ``F. Casorati''\\
	Universit\`a di Pavia\\
	via Ferrata 1, 27100 Pavia, Italy}
	\email{valeria.marcucci@unipv.it}
\author{Juan Carlos Naranjo}
\address{
Departament d'\`Algebra i Geometria \\
Facultat de Matem\`atiques \\
Universitat de Barcelona \\
Gran Via 585 \\
08007 Barcelona, Spain }
\email{jcnaranjo@ub.edu}
\thanks{J.C.Naranjo has been partially supported by the Proyecto de Investigaci\'on MTM2009-14163-C02-01.}
\thanks{V. Marcucci has been partially supported by 1) FAR 2010 (PV) \emph{"Variet\`a algebriche, calcolo algebrico, grafi orientati e topologici"}; 2) INdAM (GNSAGA) 3) PRIN 2009 \emph{``Spazi di moduli e teorie di Lie''}
}
\subjclass[2010]{14H40, 32G20, 14E05.}
\newcommand{\query}[1]{\marginpar{
\vskip-\baselineskip 
\raggedright\footnotesize                                   
\itshape\hrule\smallskip#1\par\smallskip\hrule}}            
\newcommand{\removequeries}{\renewcommand{\query}[1]{}}    
\begin{document}

\begin{abstract}
We prove the generic injectivity of the Prym map $\prym\colon \m{R}_{1,r} \ra \m{A}_{\frac{r}{2}}^\delta$ sending a double covering of an elliptic curve ramified at $r\geq 6$ points to its polarized Prym variety. For $r=6$ the map is birational and both $\m{R}_{1,6}$ and $\m{A}_{3}^\delta$ are unirational.
\end{abstract}

\maketitle

\section{Introduction}

The classical Prym map $\prym\colon \m{R}_{g} \lra \m{A}_{g-1}$ has been widely studied and it is known that it is generically injective for $g\geq 7$, generically finite of degree $27$ for $g=6$, and dominant with positive dimensional fibres for $g\leq 5$. The ramified case has deserved less attention in the literature. Recently the first author and Pirola have performed a systematic study of the injectivity for $g\geq 2$ (see \cite{PrymVP} and also \cite{miatesi}). They proved that, apart from two sporadic cases, the map is generically injective when the dimension of the source space is strictly lower than the dimension of the target. In the only equidimensional case it was known by \cite{bardcilverra} and \cite{pol1122} that the map is dominant of degree $3$.

Our paper deals with the study of the Prym map in the case of double coverings of elliptic curves ramified at $r$ points. When $r\leq 4$ the generic fibre has positive dimension and for $r=4$ it is completely described in \cite{BarthSurf}. Our main result completes the study of the ramified Prym map. Namely we prove (see Section \ref{sec:notation} for the notation):
\begin{theorem}
\label{theo:main}
 The Prym map
\[
 \prym\colon \m{R}_{1,r} \lra \m{A}_{\frac{r}{2}}^\delta 
\]
is generically injective for $r\geq 6$.
\end{theorem}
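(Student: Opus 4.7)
The plan is to establish generic injectivity in two stages: an infinitesimal step showing that $d\prym$ is injective, and a global reconstruction step upgrading this to generic injectivity of $\prym$.

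\emph{Infinitesimal step.} Let $\pi\colon C\to E$ be a general cover with covering involution $\sigma$ and branch divisor $B\in |L^{\otimes 2}|$, where $L\in \pic^{r/2}(E)$. Since $\omega_E$ is trivial and $\fascio{C}(R)\cong \pi^{*}L$ for the ramification divisor $R$, one obtains $\omega_C\cong \pi^{*}L$. Combined with the decomposition $\pi_{*}\fascio{C}=\fascio{E}\oplus L^{-1}$ into $\sigma$-eigensheaves, this yields
\[
H^0(C,\omega_C)^{-}\cong H^0(E,L),\qquad H^0(C,\omega_C^{\otimes 2})^{+}\cong H^0(E,L^{\otimes 2}).
\]
A standard computation (analogous to that for the classical Prym map) identifies the codifferential of $\prym$ at $[\pi]$ with the symmetric multiplication map
\[
m\colon \sym^{2}H^0(E,L)\lra H^0(E,L^{\otimes 2}),
\]
composed with the natural isomorphism between $H^0(E,L^{\otimes 2})$ and the cotangent space to $\m{R}_{1,r}$ at $[\pi]$. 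For $r\geq 6$ we have $\deg L=r/2\geq 3$, so $L$ is very ample and $E\hookrightarrow \pr H^0(E,L)^{*}$ is projectively normal by Castelnuovo's theorem; hence $m$ is surjective, $d\prym$ is injective, and $\prym$ is a generically finite immersion onto its image.

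\emph{Global step.} It remains to show the fiber of $\prym$ through a general $[\pi]$ is a single point. The strategy is to recover the triple $(C,\pi,E)$ from the polarized Prym $(P,\Theta_P)$ by exploiting the canonical isogeny $E\times P\to JC$: the kernel of the polarization on $P$ is an intrinsic finite $2$-torsion subgroup that records the gluing data producing the principally polarized extension $JC$, and the elliptic quotient $E$ should be pinned down using a small amount of theta-divisor data on $(P,\Theta_P)$. Given another general cover $\pi'\colon C'\to E'$ with $(P,\Theta_P)\cong (P',\Theta_{P'})$, this reconstruction produces a compatible isomorphism $(JC,\Theta_C)\isom (JC',\Theta_{C'})$ respecting the $\sigma$-eigenspace decomposition, and the Torelli theorem for $C$ and $C'$ then gives the desired isomorphism of covers.

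The hard part will be the global step, and specifically the intrinsic recovery of the elliptic quotient $E$ and the gluing from $(P,\Theta_P)$: the complementary elliptic curve is not a priori a canonical subgroup of an abelian variety containing $P$, and its identification is expected to require theta-divisor geometry in combination with the kernel of the polarization of type $\delta$. In the equidimensional case $r=6$, where both $\m{R}_{1,6}$ and $\m{A}_3^{\delta}$ are unirational $6$-folds, I would expect the degree-one conclusion to be upgraded to birationality via a direct description of $\prym$ on a sufficiently explicit dense open subset, exploiting the unirationality announced in the abstract to pin down the image of $\prym$ and reduce the degree computation to a single explicit cover.
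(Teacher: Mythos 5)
Your infinitesimal step is correct and coincides with the paper's starting point: the codifferential is the multiplication map $\sym^2H^0(E,\eta)\to H^0(E,\eta^{\otimes 2})$, which is surjective for $\deg\eta=r/2\geq 3$ by projective normality of elliptic normal curves, so $\prym$ is generically finite. But this only bounds the fibre dimension, not the degree, and your ``global step'' is a declared intention rather than an argument. The specific route you propose --- recovering $E$ and the gluing data intrinsically from $(P,\Theta_P)$ via the kernel of the polarization and ``a small amount of theta-divisor data'' --- is exactly the hard part, and as stated it cannot work: the kernel of the polarization of type $(1,\dots,1,2)$ is a copy of $(\Z/2)^2$ present in \emph{every} such polarized abelian variety and carries no information about $E$; the isogeny $E\times P\to JC$ presupposes knowledge of $C$, which is what you are trying to reconstruct; and even granting a reconstruction of $E$ (which the paper obtains by infinitesimal variation of Hodge structure, using that the semicanonical curve $E_\eta\subset\pr H^0(E,\eta)^{*}$ is cut out by the quadrics in $\ker d\prym^{*}$), you would still have to recover the branch divisor $B\in|\eta^{2}|$, which your sketch does not address at all.

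The paper's actual proof of this remaining step is substantial and quite different from what you envisage. For $r=6$ it does not use unirationality to ``reduce to a single explicit cover''; it constructs an explicit rational inverse of $\prym$ via the Del Centina--Recillas correspondence between $\m{R}_3$ and bielliptic data $(E,\eta,B,\mu)$, working on degree-$3$ covers $\m{B}_4\to\m{R}_{1,6}$ and $\m{A}_3^{\delta,2}\to\m{A}_3^{\delta}$ (the unirationality of $\m{R}_{1,6}$ and $\m{A}_3^\delta$ is a corollary of this, not an input). For $r\geq 8$ it proceeds by induction on $r$: fixing $E$, it studies the family $\Upsilon$ of pairs $(\eta,B)$, whose map to $\m{R}_{1,r}$ has generic fibre equal to two copies of $E$ (translations and the hyperelliptic involution --- the infinite automorphism group of $E$ is a genuine complication here), extends the Prym map over the stratum $Y_2$ of divisors with one double point using admissible covers and the blown-up Satake compactification, and applies a Stein-factorization lemma (Lemma \ref{lemma:stein}) to show the generic fibre of the extended map has exactly two connected components, both giving the same covering. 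None of this machinery is present or replaceable by the heuristics in your global step, so the proposal has a genuine gap.
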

In contrast with the previous cases we get the generic injectivity also for $r=6$, when the two moduli spaces have the same dimension. In this case we obtain as a byproduct the unirationality of $\m{R}_{1,6}$ and $\m{A}_{3}^\delta$ (see Corollary \ref{cor:unirationality}). The case of the moduli of abelian threefolds was obtained by Kanev in \cite{HurwitzSpacesKanev}.

In Section \ref{sec:DelCentinaRecillas} we prove the birationality of the Prym map when $r=6$. The rational inverse is explicitly given and its definition is based on a construction by Del Centina and Recillas (see \cite{DelCentinaRecillas}). In Section \ref{sec:proof} we prove the theorem for $r\geq 8$. Starting from the previous case we proceed by induction on $r$ by using a degeneration argument. The proof is a refinement of that given in \cite{PrymVP}. The existence of a non-finite group of automorphisms of the elliptic curve yields technical problems and it is not possible to apply the same method word by word. The key point is Lemma \ref{lemma:stein}, which is a generalization of \cite[Lemma 3.4]{PrymVP}. This result allows us to compute the degree of the Prym map by specializing to certain subvarieties. 

\section{Notation and preliminaries}
\label{sec:notation}

We work over the field $\C$ of complex numbers. We will use the following conventions:
\begin{itemize}
 \item $\m{R}_{g,r}$ is the moduli space of irreducible double coverings of a curve of genus $g\geq 1$ ramified at $r\geq 0$ points. A covering of this type is determined by the base curve $C$, a line bundle $\eta \in \pic^{\frac{r}{2}}(C)$, and a divisor $B\in \vert \eta^2\vert$. The dimension of $\m{R}_{g,r}$ is $3g-3+r$.
 \item  $\m{R}_{g}:=\m{R}_{g,0}$ is the moduli space of \'etale coverings. A point of this space corresponds to a curve $C$ of genus $g$ and a non-trivial point $\sigma\in J(C)$ of order $2$.
 \item To a double covering $D\ra C$ we attach the norm map $J(D)\ra J(C)$. The Prym variety $P(D, C)$ is the identity component of the kernel of this map. Its dimension is $g-1+\frac{r}{2}$ and the polarization $L_D$ of $J(D)$ induces a polarization of type $\delta:= (1, \ldots, 1,2,\ldots, 2)$ on $P(D, C)$, where $2$ is repeated $g$ times. In the \'etale case $L_D$ induces twice a principal polarization on $P(D, C)$.
\item We denote by
\begin{equation*}
\prym\colon \mathcal{R}_{g,r} \ra \mathcal{A}_{g-1+\frac{r}{2}}^\delta
\end{equation*}
the Prym map which associates to a covering its Prym variety.
\item Given a covering $(C,\eta,B)$ we denote by $C_\eta$ the image of the projective map corresponding to $\vert \omega_C\otimes\eta\vert$. We call $C_\eta$ the semicanonical curve.
\item $C_r$ denotes the $r$ symmetric product of $C$.
\end{itemize}

\section{Del Centina--Recillas construction}
\label{sec:DelCentinaRecillas}

In this section we prove the generic injectivity of the Prym map for the case of $6$ branch points. The main tool
we need is a construction provided by Del Centina and Recillas in \cite{DelCentinaRecillas}. They attach to a
generic element $\pa{C, \sigma}\in \m{R}_3$ an \'etale covering of a bielliptic curve. Since we will use the
explicit construction we recall how it works.

Given a generic $\pa{C, \sigma}\in \m{R}_3$, set
\begin{equation*}
\tilde X:= W_2\pa{C}\cap \pa{W_2\pa{C} + \sigma} \subset \pic^2\pa{C};
\end{equation*}
then $\tilde X$ is a smooth connected curve of genus $7$ with $3$ involutions
\begin{equation*}
\begin{aligned}
i_\sigma\colon L &\mapsto L\otimes \sigma,\\
i_{\omega_C}\colon L &\mapsto \omega_C\otimes L^{-1},
\end{aligned}
\end{equation*}
and the composition
\begin{equation*}
 i_{\sigma}':= i_{\omega_C}\circ i_\sigma.
\end{equation*}
The quotient of $\tilde X$ by the $3$ involutions gives a diagram of degree $2$ morphisms as follows
\begin{equation}
 \label{eq:diagrameh}
\begin{gathered}
\xymatrix{
&\tilde X \ar[rd]^{\tilde e} \ar[d]^{\pi} \ar[ld]_{\pi'}&\\
X'\ar[rd]_{\varepsilon'} & X\ar[d]^{\varepsilon} & \tilde E \ar[dl]^{e}\\
&E&
}
\end{gathered}
\end{equation}
where $E$ is the quotient of $\tilde X$ by the $\Z_2 \times \Z_2$ group given by the $3$ involutions. More
precisely
\begin{itemize}
 \item $X= \tilde X/\langle i_\sigma \rangle$, $X'= \tilde X/\langle i'_\sigma \rangle$, $\tilde E= \tilde
X/\langle i_{\omega_C}\rangle$;
\item $X$ and $X'$ have genus $4$ and $\tilde E$ has genus $1$;
\item $\pi, \pi'$, and $e$ are \'etale;
\item $\varepsilon$ and $\varepsilon'$ have the same branch divisor $B$ of degree $6$;
\item $\varepsilon\colon X \ra E$ is determined by $B$ and a line bundle $\eta\in \pic^3\pa{E}$ such that
$B\in\vert\eta^2\vert$, in the same way $\varepsilon'\colon X' \ra E$ is given by $B$ and $\eta'\in\pic^3\pa{E}$
and the non-trivial $2$ order point $\mu:=\eta'\otimes\eta^{-1}$ defines the covering $e\colon \tilde E \ra E$;
\item $\tilde X$ is the fibred product $X\times_{E} \tilde E$, hence $E,\eta,B$, and $\mu$ determine the whole diagram.
\end{itemize}

Motivated by these properties, we consider the moduli space $\m{B}_4$ of the classes of elements $\pa{E,\eta,B,\mu}$ such that $E$ is an elliptic curve and $\eta, B, \mu$ are defined as before. The main theorem in \cite{DelCentinaRecillas} states that the rational map
\begin{align*}
  \rho\colon \m{R}_3 &\dashrightarrow \m{B}_4\\
\pa{C, \sigma} &\mapsto \pa{E,\eta,B,\mu}
\end{align*}
is birational (see \cite[Theorem 2.3.2]{DelCentinaRecillas}). The inverse rational map can be described in the following way: the Prym variety $P(\tilde X , X')$ is isomorphic, as a principally polarized abelian variety, to $J\pa{C}$. Moreover, the pullback map $J\pa{E} \ra J(\tilde X)$ sends the subgroup of $2$ order points to $\Z_2\subset P(\tilde X , X')$ and the corresponding non-trivial generator is mapped to $\sigma$ through the isomorphism. 

To relate this construction to our Prym map we need to consider the moduli space $\m{A}_3^{\delta, 2}$ of polarized abelian threefolds $\pa{P,L}$ of type $\delta:=\pa{1,1,2}$ with a marked non-trivial ($2$ torsion) point $\omega$ such that $t^*_{\omega} L\simeq L$. It is not hard to see that there is a well defined map
\begin{align*}
 \tilde \prym \colon \m{B}_4 &\lra \m{A}_3^{\delta, 2}\\
\pa{E,\eta,B,\mu} &\mapsto (P(X,E),\varepsilon^*\mu).
\end{align*}
We get a commutative diagram
\begin{equation*}
 \begin{gathered}
\xymatrix{
\m{B}_4 \ar[d]_{p_1}\ar[rr]^{\tilde \prym} && \m{A}_3^{\delta, 2}\ar[d]^{p_2}\\
\mathcal{R}_{1,6}\ar[rr]_{\prym} && \mathcal{A}^{\delta}_3
}
\end{gathered}
\end{equation*}
where $p_1$ and $p_2$ are the forgetful map which are \'etale coverings of degree $3$. In order to prove that $\prym$ is birational we will prove that $\tilde \prym$ is.

Consider the rational map
\[
\varphi\colon \m{A}_3^{\delta, 2} \dashrightarrow \mathcal{R}_3
\]
defined as follows.  Given a generic $\pa{P, \omega}\in \m{A}_3^{\delta, 2}$, let $A$ be the quotient of $P$ by the group of order $2$ generated by $\omega$ and let $f\colon P \ra A$ be the projection morphism. Then, there exists a principal polarization $M$ on $A$ such that $f^*M\simeq L$ and there is a smooth curve $C$ of genus $3$ such that $A$ and $J\pa{C}$ are isomorphic as principally polarized abelian varieties. We denote by $\sigma$ the unique non-zero point in the image of $\set{x\in P \, \vert \, t_x^*L\simeq L}$ in $J\pa{C}$ and we set
\[
 \varphi\pa{P, \omega}:=\pa{C,\sigma}.
\]

\begin{theorem}
 The Prym map
\[
 \prym\colon \m{R}_{1,6} \lra \m{A}_{3}^\delta 
\]
is generically injective.
\end{theorem}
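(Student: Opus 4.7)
The plan is to deduce the generic injectivity of $\prym$ from the birationality of its lift $\tilde\prym\colon \m{B}_4 \to \m{A}_3^{\delta, 2}$ in the commutative square preceding the statement. Since the forgetful maps $p_1$ and $p_2$ are \'etale of degree $3$, the equality $\deg(p_2)\cdot \deg(\tilde\prym) = \deg(\prym)\cdot \deg(p_1)$ shows that $\deg(\tilde\prym) = 1$ forces $\deg(\prym)=1$. So everything reduces to proving that $\tilde\prym$ is birational.

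To this end I would exhibit $\rho\circ\varphi\colon \m{A}_3^{\delta,2}\dashrightarrow\m{B}_4$ as a rational inverse to $\tilde\prym$, where $\rho$ is the Del Centina--Recillas birational map recalled above. Since $\rho$ is birational, it suffices to verify that $\varphi\circ\tilde\prym\colon \m{B}_4\dashrightarrow\m{R}_3$ coincides with $\rho^{-1}$. Concretely, starting from a generic $\pa{E,\eta,B,\mu}\in\m{B}_4$, one builds the diagram \eqref{eq:diagrameh} and sets $\pa{C,\sigma}:=\rho^{-1}\pa{E,\eta,B,\mu}$; the task becomes checking that $\varphi\pa{P(X,E),\varepsilon^*\mu}=\pa{C,\sigma}$.

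The heart of the matter is an identification, as principally polarized abelian varieties, of the quotient $P(X,E)/\langle\varepsilon^*\mu\rangle$ with $J\pa{C}$, together with the correct recognition of the distinguished $2$-torsion $\sigma$. By the Del Centina--Recillas construction we already have $P(\tilde X, X')\simeq J\pa{C}$ as ppavs, with $\sigma$ realized as the non-trivial image of $J\pa{E}\to J(\tilde X)\to P(\tilde X, X')$. Exploiting the tower $\tilde X\to X\to E$ together with the \'etale cover $\pi'\colon \tilde X\to X'$, I would produce a natural isogeny $P(X,E)\to P(\tilde X, X')$, check that its kernel is exactly $\langle\varepsilon^*\mu\rangle$, and verify that the pullback of the principal polarization on $P(\tilde X, X')$ recovers the polarization of type $\pa{1,1,2}$ on $P(X,E)$. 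The $2$-torsion matching is then a bookkeeping translation: the non-zero image of $\set{x\in P(X,E)\,\vert\, t_x^*L\simeq L}$ in $J\pa{C}$ is identified with the $\sigma$ described by diagram \eqref{eq:diagrameh}.

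The main obstacle I anticipate is precisely this polarization-and-$2$-torsion bookkeeping along the chain of isogenies encoded by diagram \eqref{eq:diagrameh}: one must pin down that it is \emph{this particular} order-two element $\varepsilon^*\mu$, and no other, that generates the kernel, and that the induced form on the quotient is principal rather than of some larger type. Once this linear-algebraic verification is in place, birationality of $\tilde\prym$, and hence the generic injectivity of $\prym$, follow immediately.
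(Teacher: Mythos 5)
Your proposal follows essentially the same route as the paper: reduce to the birationality of $\tilde\prym$ via the degree-$3$ forgetful maps, and verify that $\varphi\circ\tilde\prym\circ\rho=\Id$ by showing that $\pi^*$ restricts to an isogeny $P(X,E)\to P(\tilde X,X')\simeq J(C)$ with kernel $\langle\varepsilon^*\mu\rangle$ carrying the principal polarization back to the $(1,1,2)$ one. The polarization bookkeeping you flag as the main obstacle is settled in the paper by the classical fact (Mumford) that for the \'etale double cover $\pi$ one has $(\pi^*)^*\fascio{J(\tilde X)}(\Theta_{\tilde X})$ algebraically equivalent to $\fascio{J(X)}(2\Theta_X)$.
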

\begin{proof}
The statement is an easy consequence of the commutativity of the next diagram since $\rho$ is birational
 \begin{equation*}
 \begin{gathered}
\xymatrix{
&&&\mathcal{R}_3\ar@/_1pc/@{-->}[dlll]_{\rho}\\
\mathcal{B}_4\ar[d]_{p_1}\ar[rr]_{\tilde \prym} && \m{A}_{3}^{\delta,2} \ar[d]^{p_2}\ar@{-->}[ur]_{\varphi}&\\
\mathcal{R}_{1,6}\ar[rr]_{\prym} && \mathcal{A}^{\delta}_3&
}
\end{gathered}
\end{equation*}
To show that $\varphi \circ\tilde \prym \circ \rho= \Id$ we fix a generic $\pa{C, \omega}\in \m{R}_3$. By keeping the notation of diagram \eqref{eq:diagrameh}, we have to prove that
\begin{equation*}
 \varphi\pa{P(X, E), \varepsilon^*\mu}= \pa{C,\sigma}.
\end{equation*}
 It is easy to see that $\pi^*\colon J\pa{X} \ra J(\tilde X)$ restricts to an isogeny 
\[
 \pi^*|_{P(X,E)}\colon P(X,E) \lra P(\tilde X, X') \simeq J(C)
\]
such that $\ker \pi^*|_{P(X,E)}=\langle \varepsilon^*\mu \rangle$. Therefore it is enough to prove that the pullback of the principal polarization of $P(\tilde X, X')$ is the $\pa{1,1,2}$ polarization of $P(X,E)$. This follows easily from the well known fact (see \cite{prymMumford}) that $(\pi^*)^*\fascio{J(\tilde X)}(\Theta_{\tilde X})$ is algebraically equivalent to $\fascio{J(X)}(2\Theta_{X})$.
\end{proof}

It is well known that $\m{R}_3$ and therefore $\m{B}_4$ are rational (see e.g. \cite{DolgRat} and references therein). Thus we obtain:

\begin{cor}
\label{cor:unirationality}
 The moduli spaces $\m{R}_{1,6}$ and $\m{A}_3^\delta$ are unirational.
\end{cor}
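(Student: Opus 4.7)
The plan is to chain together the birational and dominant maps already assembled in this section with the classical rationality of $\m{R}_3$. Recall that the Del Centina--Recillas map $\rho\colon \m{R}_3 \dashrightarrow \m{B}_4$ is birational, the forgetful morphism $p_1\colon \m{B}_4 \to \m{R}_{1,6}$ is an \'etale cover of degree $3$, and by the theorem just proved the Prym map $\prym\colon \m{R}_{1,6} \to \m{A}_3^\delta$ is generically injective, hence dominant (indeed birational), since $\dim \m{R}_{1,6} = \dim \m{A}_3^\delta = 6$.

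First, from the rationality of $\m{R}_3$ (cited as \cite{DolgRat}) and the birationality of $\rho$, one deduces that $\m{B}_4$ is rational. Composing with $p_1$ then exhibits $\m{R}_{1,6}$ as the image of a rational variety under a finite dominant rational map, so $\m{R}_{1,6}$ is unirational. Composing further with $\prym$ yields a dominant rational map from the rational variety $\m{B}_4$ to $\m{A}_3^\delta$, and therefore $\m{A}_3^\delta$ is unirational as well.

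There is no substantial obstacle here: the statement is a formal consequence of the preservation of unirationality under dominant rational maps, and all the nontrivial input has already been established in the preceding theorem and in the cited results of Del Centina--Recillas and Dolgachev. The only point worth flagging is that one uses unirationality (rather than rationality) of $\m{R}_{1,6}$ and $\m{A}_3^\delta$ precisely because the map $p_1$, while finite and dominant, need not admit a rational section.
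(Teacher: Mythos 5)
Your argument is correct and follows the same route as the paper: the rationality of $\m{R}_3$ plus the birationality of $\rho$ give the rationality of $\m{B}_4$, and then the dominant maps $p_1$ and $\prym$ (the latter dominant because it is generically injective between equidimensional spaces) transport unirationality to $\m{R}_{1,6}$ and $\m{A}_3^\delta$. Your closing remark about why one only gets unirationality (no rational section of $p_1$) is a sensible clarification that the paper leaves implicit.
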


\begin{rmk}
 The unirationality of $\m{A}_3^\delta$ is proved in \cite{HurwitzSpacesKanev} by a complete different method.
\end{rmk}

\section{Proof of the theorem}
\label{sec:proof}

The whole section is devoted to the proof of Theorem \ref{theo:main}. We proceed by induction with respect to $r\geq 6$ and even. The initial step $r=6$ has been proved in the previous section, now we give the proof of the induction step. We follow closely the techniques used in \cite{PrymVP} to prove a similar theorem for greater genus. In our situation the existence of a non-finite group of automorphisms of the base curve yields technical problems and it is not possible to apply the same method word by word. We have to refine some tools of that proof, this will be especially clear in Lemma \ref{lemma:stein}. Nevertheless, the general strategy is similar, so we only sketch the proof and we refer to \cite{PrymVP} for further details. First we notice that the codifferential of the Prym map 
\begin{equation*}
d\prym^*\colon \sym^2H^0\pa{E, \eta} \lra H^0\pa{E, \fascio{E}\pa{B}}
\end{equation*}
is generically surjective and therefore $\prym$ is generically finite (see \cite[Proposition 2.2]{PrymVP}). Moreover $\ker d\prym^*$ is the space of quadrics vanishing on the semicanonical curve $E_\eta$ and it is not hard to prove that $E_\eta$ is the intersection of these quadrics (see \cite[Theorem 2.8]{PrymVP}). It follows by a standard argument of infinitesimal variation of Hodge structures that the generic Prym variety determines the base curve $E$ and the line bundle $\eta$. 

Now we use a degeneration argument by keeping fixed the elliptic curve and allowing branch points coincide. We observe that the compactification of the Prym map defined in \cite[Section 3]{PrymVP} also works in the case of genus $1$. So we consider 
\begin{equation*}
\Upsilon:=\set{\pa{\eta, B} \in \pic^{\frac{r}{2}}\pa{E} \times E_r \,\vert \, B\in \abs{\eta^2}},
\end{equation*}
and the partition
\[
\Upsilon=\bigsqcup_{k=1}^r Y_k,
\]
where
\begin{equation*}
Y_{k}:=\set{(\eta, \sum_{i} n_iy_i) \in \Upsilon \,\vert \, \sum_i\pa{n_i-1}=k-1}.
\end{equation*}
We remark that $\Upsilon$ is an \'etale $4$-covering of the symmetric product $E_r$ of $E$ and each $Y_{k}$ maps to the $k$-diagonal of $E_r$. In particular, $Y_1$ is an \'etale covering of the open set of divisors with no multiple points.
The rational map
\begin{equation*}
\begin{split}
\mathcal{T} \colon \Upsilon &\dashrightarrow \mathcal{R}_{1,r}\\
\pa{\eta, B}&\mapsto \pa{E, \eta, B},
\end{split}
\end{equation*}
is regular on $Y_1$ and
\begin{equation}
 \label{eq:fibre}
\mathcal{T}^{-1}\pa{\mathcal{T}\pa{\eta, B}}=X_{\eta,B}\cup X_{i^*\eta,i^*B}, 
\end{equation}
where $i$ is an hyperelliptic involution on $E$ and $X_{\eta,B}:=\set{\pa{t_e^*\eta, t^*_e B}}_{e\in E}$. Thus the generic fibre of $\mathcal{T}$ consists of two disjoint copies of $E$. We emphasize that this is the point in which our situation differs from that considered in \cite{PrymVP}, since in that case the base curve has a finite number of automorphisms.

Let us consider the rational map
\begin{equation*}
  \prym_E\colon  \Upsilon \dashrightarrow \mathcal{A}^{\delta}_{\frac{r}{2}}
\end{equation*} 
that is the composition of $\m{T}$ with the Prym map. Obviously this map is regular on $Y_1$. As in \cite{PrymVP} one can see that $\prym_E$ extends to $Y_2$ once we replace $\mathcal{A}^{\delta}_{\frac{r}{2}}$ with the normalized blowing-up $\bar{\mathcal{A}}^{\delta}_{\frac{r}{2}}$ of its Satake compactification. Namely, the map $\prym_E$ extends to a map
 \begin{equation*}
  \bar \prym_E\colon  \Upsilon \dashrightarrow \bar{\mathcal{A}}^{\delta}_{\frac{r}{2}}
\end{equation*}
whose indeterminacy locus is contained in $\bigsqcup_{k\geq 3}Y_k$. By blowing up in a convenient centre we get a regular map
\[
 \tilde\prym_E \colon  \tilde \Upsilon \lra \bar{\mathcal{A}}^{\delta}_{\frac{r}{2}}.
\]
Given a point $x\in E$ we take
\[
 z:=\pa{\eta'\otimes \fascio{E}\pa{x}, B'+2x}\in Y_2.
\]
The admissible covering (in the sense of \cite[Chapter 3, Section G]{HM98}) corresponding to $z$ is as in Figure \ref{disegno} and the image in $\bar{\mathcal{A}}^{\delta}_{\frac{r}{2}}$ is described by the following data:
\begin{itemize}
 \item the compact Prym variety $P(D,E)$;
 \item the class $\pm[p_x-q_x]$ in the Kummer variety of $P(D,E)$.
\end{itemize}

\begin{figure}
   \begin{center}
    \begin{picture}(200,200)
     \put(-100,-20){\includegraphics[scale=0.4]{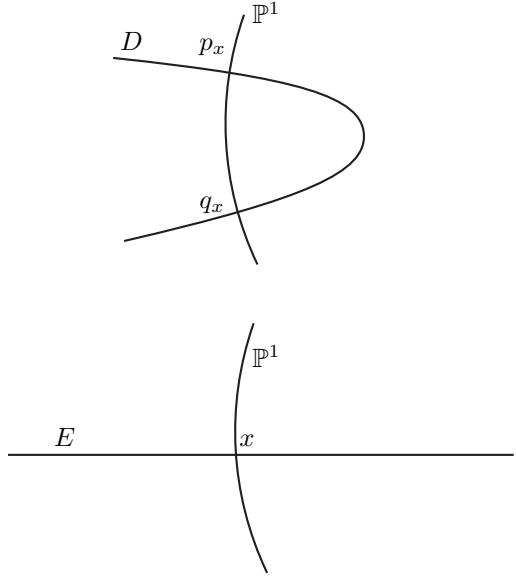}}
     \put(35,40){$E$}
     \put(60,190){$D$}
      \put(110,70){$\mathbb{P}^1$}
     \put(105,40){$x$}
      \put(90,130){$q_x$}
      \put(90,190){$p_x$}
      \put(110,200){$\mathbb{P}^1$}
    \end{picture}
   \end{center}
   \caption{Admissible double covering corresponding to $z$}\label{disegno}
  \end{figure}

Now we want to compute the degree of the Prym map by looking at the behaviour of $\tilde \prym_E$ along $Y_2$. To this end we will need the following generalization of \cite[Lemma 3.4]{PrymVP} (see also \cite[Lemma 2.15]{miatesi}).

\begin{lemma}
\label{lemma:stein}
Let $f\colon X \ra Z$ be a surjective, proper morphism of varieties over an algebraically closed field $\Bbbk$ such that $\dim Z\geq 2$. Consider an integral, locally closed subset $Y$ of $X$ of codimension $1$, not contained in the singular locus of $X$, and set $H:=f\pa{Y}\cap f\pa{Y^c}$. Assume that:
\begin{enumerate}
  \item \label{item:lemmaastratto1B} the codimension of $H$ in $Z$ is at least $2$;
  \item \label{item:lemmaastratto2B} the differential of $f$ is surjective at a generic point of $Y$;
  \item \label{item:lemmaastratto3B} the generic fibre at a point of $f\pa{Y}$ has $n$ connected components.
\end{enumerate}
Then the generic fibre of $f$ has $m\leq n$ connected components.
\end{lemma}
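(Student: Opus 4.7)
The plan is to use the Stein factorization $f=h\circ g$, where $g\colon X\to W$ has (generically) geometrically connected fibres and $h\colon W\to Z$ is finite of degree $m$, equal to the number of connected components of the generic fibre of $f$. The target is to show that, for a generic $v\in V:=\overline{f(Y)}$, the map $h$ is étale at every preimage of $v$, so $|h^{-1}(v)|=m$; together with the decomposition $f^{-1}(v)=\bigsqcup_{w\in h^{-1}(v)}g^{-1}(w)$, this gives $n=\#\pi_0(f^{-1}(v))\geq m$, as required.

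First, I would check that $\dim V=\dim Z-1$. Hypothesis~(2) gives $\dim V\geq\dim Z-1$: at a generic point $y\in Y$ where both $Y$ and $X$ are smooth, the subspace $T_yY\subset T_yX$ has codimension~$1$ and the surjective $df_y$ restricts to $T_yY$ with image of codimension at most~$1$ in $T_{f(y)}Z$. For the opposite inequality, $X\setminus\bar Y$ is open dense and contained in $Y^c$, so by properness of $f$ the image $f(X\setminus\bar Y)$ is dense in $Z$, hence contains a dense open of $Z$; if $V=Z$ then $H\supseteq f(X\setminus\bar Y)$ would also be dense, contradicting hypothesis~(1).

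Next, fix a generic $v\in V$. Since $V\cap H$ has codimension~$\geq 1$ in $V$ by~(1) and $f(Y)$ is dense and constructible in $V$, I get $v\in f(Y)\setminus H$, which by the very definition of $H$ forces $f^{-1}(v)\subset Y$. Let $Y_0\subset Y$ denote the open dense locus on which $f$ is smooth, provided by hypothesis~(2). The key dimension bound is $\dim(Y\setminus Y_0)\leq \dim X-2$, so for a generic $v\in V$ the fibre $(f|_{Y\setminus Y_0})^{-1}(v)$ has dimension at most $\dim X-\dim Z-1$, strictly less than the dimension $\dim X-\dim Z$ of $f^{-1}(v)$ (equidimensional by generic flatness). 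Hence $f^{-1}(v)\cap Y_0$ is open dense in $f^{-1}(v)$ and meets every connected component.

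To finish, given a connected component $C$ of $f^{-1}(v)$, the image $g(C)$ is a connected subset of the finite set $h^{-1}(v)$, hence a single point $w_C$. Picking $y\in C\cap Y_0$, the chain rule $df_y=dh_{w_C}\circ dg_y$ and surjectivity of $df_y$ imply that $dh_{w_C}$ is an isomorphism, so $h$ is étale at $w_C$. Surjectivity of $g$, a standard feature of the Stein factorization, ensures that every $w\in h^{-1}(v)$ arises as some $w_C$; hence $h$ is étale at every preimage of $v$, so $|h^{-1}(v)|=m$, and the decomposition of $f^{-1}(v)$ yields $n\geq m$. The main obstacle I expect is the dimension count in the previous paragraph, where the codimension-two hypothesis on $H$ is genuinely combined with the generic smoothness of $f$ along $Y$; this is exactly the place where the earlier argument from \cite[Lemma~3.4]{PrymVP} has to be strengthened to accommodate positive-dimensional fibres of the forgetful map $\mathcal T$.
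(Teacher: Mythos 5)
Your proof is correct and takes essentially the same route as the paper, whose proof consists precisely of invoking the Stein factorization and extending the argument of \cite[Lemma 3.4]{PrymVP}; you have in effect reconstructed that argument in full (Stein factorization $f=h\circ g$, reduction to counting points of the finite part $h$ over a generic point of $\overline{f(Y)}$, and using hypotheses (1)--(2) to see that $h$ is unramified there). The only step I would rephrase is the lower bound $\dim \overline{f(Y)}\geq \dim Z-1$: bounding $\dim T_{f(y)}\overline{f(Y)}$ from below does not bound $\dim \overline{f(Y)}$ from below (tangent spaces overestimate dimension), but the same hypothesis on $df_y$ gives $\dim_y f^{-1}(f(y))\leq \dim X-\dim Z$ and hence the desired inequality by the fibre-dimension theorem applied to $f|_Y$.
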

\begin{proof}
 By using Stein factorization theorem (see \cite[Corollaire 4.3.3]{EGA3})) the statement is a straightforward extension of the proof of \cite[Lemma 3.4]{PrymVP}.
\end{proof}

We set $f:=\tilde \prym_E$ and we want to consider $Y=Y_2$. The same proof as in \cite[Section 3]{PrymVP} shows that $Y_2$ satisfies the hypotheses of the lemma. We finish the proof of the theorem by noticing that \eqref{eq:fibre} implies that the generic fibre of $\tilde \prym_E$ has at least $2$ connected components. Due to the lemma we know that they are exactly $2$. Since all the elements of these $2$ fibres identify the same covering of $E$, we are done.

\begin{ack}
 We are very grateful to professor Pirola for his encouragement and for many helpful suggestions. We also thank professor Verra for pointing out the reference \cite{DelCentinaRecillas}.
\end{ack}

\bibliographystyle{alpha}

\bibliography{biblio}

\begin{thebibliography}{Mum74}

\bibitem[Bar87]{BarthSurf}
W.~Barth.
\newblock Abelian surfaces with {$(1,2)$}-polarization.
\newblock In {\em Algebraic geometry, {S}endai, 1985}, volume~10 of {\em Adv.
  Stud. Pure Math.}, pages 41--84. North-Holland, Amsterdam, 1987.

\bibitem[BCV95]{bardcilverra}
F.~Bardelli, C.~Ciliberto, and A.~Verra.
\newblock Curves of minimal genus on a general abelian variety.
\newblock {\em Compositio Math.}, 96(2):115--147, 1995.

\bibitem[DCR89]{DelCentinaRecillas}
A.~Del~Centina and S.~Recillas.
\newblock On a property of the {K}ummer variety and a relation between two
  moduli spaces of curves.
\newblock In {\em Algebraic geometry and complex analysis ({P}\'atzcuaro,
  1987)}, volume 1414 of {\em Lecture Notes in Math.}, pages 28--50. Springer,
  Berlin, 1989.

\bibitem[Dol08]{DolgRat}
I.~V. Dolgachev.
\newblock Rationality of {$\mathcal{R}_2$} and {$\mathcal{R}_3$}.
\newblock {\em Pure Appl. Math. Q.}, 4(2, part 1):501--508, 2008.

\bibitem[Gro63]{EGA3}
A.~Grothendieck.
\newblock \'{E}l\'ements de g\'eom\'etrie alg\'ebrique. {III}. \'{E}tude
  cohomologique des faisceaux coh\'erents. {II}.
\newblock {\em Inst. Hautes \'Etudes Sci. Publ. Math.}, (17):91, 1963.

\bibitem[HM98]{HM98}
J.~Harris and I.~Morrison.
\newblock {\em Moduli of curves}, volume 187 of {\em Graduate Texts in
  Mathematics}.
\newblock Springer-Verlag, New York, 1998.

\bibitem[Kan04]{HurwitzSpacesKanev}
V.~Kanev.
\newblock Hurwitz spaces of triple coverings of elliptic curves and moduli
  spaces of abelian threefolds.
\newblock {\em Ann. Mat. Pura Appl. (4)}, 183(3):333--374, 2004.

\bibitem[Mar11]{miatesi}
V.~Marcucci.
\newblock {\em Curves in Jacobian and Prym varieties}.
\newblock PhD thesis, 2011.

\bibitem[MP10]{PrymVP}
V.~Marcucci and G.~P. Pirola.
\newblock Generic {T}orelli theorem for {P}rym varieties of ramified coverings.
\newblock {S}ubmitted, 2010.
\newblock arXiv:1010.4483v2.

\bibitem[Mum74]{prymMumford}
D.~Mumford.
\newblock Prym varieties. {I}.
\newblock In {\em Contributions to analysis (a collection of papers dedicated
  to {L}ipman {B}ers)}, pages 325--350. Academic Press, New York, 1974.

\bibitem[NR95]{pol1122}
D.~S. Nagaraj and S.~Ramanan.
\newblock Polarisations of type {$(1,2,\cdots,2)$} on abelian varieties.
\newblock {\em Duke Math. J.}, 80(1):157--194, 1995.

\end{thebibliography}
\end{document}